\newcommand{\R}{\mathbb R}
\newcommand{\N}{\mathbb{N}}
\newcommand{\Z}{\mathbb{Z}}
\newcommand{\Q}{\mathbb Q}
\newcommand{\G}{\mathcal G}
\newcommand{\spa}{{\rm span \,}}
\newcommand{\supp}{{\rm supp \,}}
\newtheorem{theorem}{Theorem}
\newtheorem{lemma}[theorem]{Lemma}
\newtheorem{proposition}[theorem]{Proposition}
\newtheorem{remark}[theorem]{Remark}
\newtheorem{example}{Example}
\begin{document}

\title{Time-frequency shift invariance  and the {A}malgan {B}alian {L}ow Theorem}
\author{Carlos Cabrelli, Ursula Molter, G\"otz E. Pfander}

\address{\textrm{(C. Cabrelli)}
Departamento de Matem\'atica,
Facultad de Ciencias Exac\-tas y Naturales,
Universidad de Buenos Aires, Ciudad Universitaria, Pabell\'on I,
1428 Buenos Aires, Argentina and
IMAS/CONICET, Consejo Nacional de Investigaciones
Cient\'ificas y T\'ecnicas, Argentina}
\email{cabrelli@dm.uba.ar}

\address{\textrm{(G. Pfander)}
Mathematics, Jacobs University, 28759 Bremen, Germany \& Mathematisch Geographische Fakult\"at, KU Eichst\"att, Germany. }
\email{g.pfander@jacobs-university.de}

\address{\textrm{(U. Molter)}
Departamento de Matem\'atica,
Facultad de Ciencias Exac\-tas y Naturales,
Universidad de Buenos Aires, Ciudad Universitaria, Pabell\'on I,
1428 Buenos Aires, Argentina and
IMAS/CONICET, Consejo Nacional de Investigaciones
Cient\'ificas y T\'ecnicas, Argentina}
\email{umolter@dm.uba.ar}

\thanks{The research of
C.~Cabrelli and U.~Molter are partially supported by
Grants  PICT 2011-0436 (ANPCyT), PIP 2008-398 (CONICET). The DAAD supported  G\"otz E. Pfander through the PROALAR grant 56033216. This research was carried out while G\"otz E. Pfander was John von Neumann visiting professor at Technical University Munich, Fall 2013/2014. He would like to thank the mathematics department   for their hospitality during his stay. 
}

\keywords{Balian-Low Theorem, additional shift invariance, Gabor frames, time-frequency analysis, Feichtinger algebra}

\begin{abstract}
We consider smoothness properties of the generator of a principal Gabor space on the real line which is invariant under some additional translation-modulation pair. We prove that if a Gabor system  on a lattice  with rational density  is a Riesz basis for its closed linear span,   and if the closed linear span, a Gabor space,  has any additional translation-modulation invariance, then its generator cannot  decay well in time and in frequency simultaneously.
\end{abstract}

\maketitle

\section{Introduction}

The Balian-Low Theorem, a key result in time-frequency analysis, expresses the fact that time-frequency concentration and non redundancy are essentially incompatible. Specifically,  if $\varphi\in L^2(\R)$, $\Lambda \subset \R^d$ is a lattice and  the system $(\varphi, \Lambda)= \{e^{2\pi i \eta x} \varphi(x - u): (u, \eta) \in \Lambda\}$ is a Riesz basis for $L^2(\R)$, then 
$\varphi$ satisfies
\begin{align}\label{eqn:uncertainty-product}
  \Big(\int (x-a)^2 |\varphi (x)|^2 \, dx \Big)\cdot \Big(\int (\omega-b)^2 |\widehat \varphi (\omega)|^2 \, d\omega \Big) =\infty, \quad a,b\in\R.
\end{align}
This theorem was originally stated independently by Balian \cite{Bal81} and Low \cite{Low85} for orthogonal systems, but both of their proofs contained a gap, which was later filled by Coifman et.~al \cite{Dau90} who also generalized it to Riesz bases. For general references on the Balian-Low Theorem we refer the reader to \cite{BHW95,Hei07}. In \cite{BHW95}, the authors also state and prove the so called  Amalgam Balian Low Theorem, which states that if    $(\varphi,\alpha\Z\times\beta\Z)$ is a Riesz basis for  $L^2(\R)$,  then $\varphi$ cannot belong to the Feichtinger algebra $S_0(\R)$, a class of functions decaying well in time and frequency. For a  definition of $S_0(\R)$ see \eqref{s0} below. Note that the  Amalgam Balian Low Theorem is seemingly weaker then Balian-Low Theorem, but is not implied by it. 

We define the unitary operators, translation $T_u:L^2(\R)\longrightarrow L^2(\R)$, $T_u f(x)=f(x-u)$, modulation $M_\eta:L^2(\R)\longrightarrow L^2(\R)$, $M_\eta f(x)=e^{2\pi i \eta x} f(x)$, and time-frequency shift  $\pi(u,\eta)=M_\eta T_u$, where $u\in\R$ and $\eta\in\widehat \R$, the dual group of $\R$ which is isomorphic to $\R$. 
For $\varphi\in L^2(\R)$ and a lattice $\Lambda= R\Z^2\subset \R\times \widehat \R$, $R\in\R^{2\times 2}$, with density $|\det R\, |^{-1}$ if $R$ is full rank and density 0 else, we define Gabor systems as $(\varphi,\Lambda)=  \{\pi(\lambda)\varphi\}_{\lambda\in\Lambda}$ and Gabor spaces as $\G(\varphi,\Lambda)=\overline{\spa\{\pi(\lambda)\varphi\}}$, where $\overline V$ is the closure of $V$ in $L^2(\R)$. For background on Gabor systems we refer to the monograph  \cite{gro01}.

This paper  addresses the  question whether there may exist a $\mu\in \R\times \widehat \R\setminus \Lambda$  with $\pi(\mu)\varphi \in \G(\varphi,\Lambda)$.  Equivalently, for $\Lambda'$ being a subgroup of $\R\times \widehat \R$ containing $\Lambda$, under which conditions on $\varphi$ is it possible that $\G(\varphi,\Lambda)=\G(\varphi,\Lambda')$?  
    
The case that $\mu,\Lambda\subseteq \R\times\{0\}$ is discussed at length in terms of shift-invariant spaces in the literature, see for example \cite{ACHKM10,ASW11, AKTW12}.  Since the Fourier transform is unitary, analogous results are implied for $\mu,\Lambda \in \{0\} \times\widehat \R$.  
 As we shall see in Remark~\ref{remark:onlyshift} in Example~\ref{ex:1}, the case $\mu \in \R\times\{0\}$ and $\pi(\mu)\varphi \in \G(\varphi,\alpha\Z\times \beta\Z)$ does not necessitate that $\pi(\mu)\varphi$ is in the shift-invariant space $\G(\varphi,\alpha\Z\times\{ 0\})$, so even the  case with $\mu\in \R \times \{0\} $ is not covered in the literature.
 
 On the other hand, the existing Balian Low type results for shift-invariant spaces only apply to  {\em principal} shift-invariant spaces, that is, spaces that can be generated by just one generator. 
Even though Gabor spaces are particular cases of shift-invariant spaces, except for the case $\Lambda=\alpha\Z \times \{0\}$, they are not principal 
shift-invariant spaces, so these results do not apply in the setting considered here.

To state our result, we recall that the {\em Feichtinger algebra} $S_0(\R)$ is defined by
\begin{equation}\label{s0}
	S_0(\R) =\left\{f\in L^2(\R): V f(t,\nu)= \int f(x) e^{-(x-t)^2}e^{2\pi i x \nu}\, dx \in L^1(t,\nu)\right\}.
\end{equation}
Note that $Vf(t,\nu) \in L^2(t,\nu) \cap  L^\infty (t,\nu)$ for all  $f\in L^2(\R)$ and the requirement $Vf(t,\nu) \in L^1(t,\nu)$ essentially necessitates $L^1$ decay of $f$ and of its Fourier transform $\widehat f$. For details on the Feichtinger algebra see \cite{Fei81,FZ98,gro01}.

We establish the following theorem.
\begin{theorem}\label{thm:maintheorem}
 If $(\varphi,\Lambda)$ is a Riesz basis for its closed linear span $\mathcal G(\varphi,\Lambda)$ with $\varphi\in S_0(\R)$ and the density of the lattice $\Lambda$ is rational, then    $\pi(u,\eta)\varphi \notin \mathcal G(\varphi,\Lambda)$ for all $(u,\eta)\notin \Lambda$.

In the case $\Lambda=\alpha\Z\times\beta\Z$, then the condition $\varphi\in S_0(\R)$ can be replaced with the weaker condition that $Z_\alpha\varphi(x,\omega)=\sum_{n\in\Z} f(x+n\alpha)e^{-2\pi i \omega  n\alpha}$ is continuous on $\R\times\widehat\R$. 
\end{theorem}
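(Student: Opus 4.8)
The plan is to argue by contradiction: suppose $\pi(\mu)\varphi\in\mathcal G(\varphi,\Lambda)$ for some $\mu=(u,\eta)\notin\Lambda$. First I would reduce to a purely lattice-theoretic situation. Since $\mathcal G(\varphi,\Lambda)$ is invariant under every $\pi(\lambda)$, $\lambda\in\Lambda$, and since $\pi(\mu)\pi(\lambda)$ is a unimodular multiple of $\pi(\lambda)\pi(\mu)$, the hypothesis already forces $\mathcal G(\varphi,\Lambda)$ to be invariant under $\pi(\mu)$, hence under the group $\Lambda':=\langle\Lambda,\mu\rangle$; as $\varphi$ lies in the space, $\mathcal G(\varphi,\Lambda')=\mathcal G(\varphi,\Lambda)$. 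By strong continuity of $(u,\eta)\mapsto\pi(u,\eta)$ the space is also invariant under the closed subgroup $\overline{\Lambda'}$. If $\overline{\Lambda'}=\R\times\widehat\R$, then by irreducibility of the Schr\"odinger representation $\mathcal G(\varphi,\Lambda)=L^2(\R)$ and $(\varphi,\Lambda)$ is a Gabor Riesz basis of $L^2(\R)$ of necessarily critical density, so the conclusion is the classical Amalgam Balian--Low Theorem of \cite{BHW95} (whose proof in the rectangular case uses only continuity of $Z_\alpha\varphi$). Otherwise $\overline{\Lambda'}$ is a proper closed subgroup containing the lattice $\Lambda$; from it --- after, if necessary, a metaplectic change of coordinates, which preserves $S_0(\R)$ and sends lattices to lattices --- one extracts a non-lattice shift that is \emph{rational} over $\Lambda$. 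Thus we may assume $N\mu\in\Lambda$ for some $N\ge 1$, so that $\Lambda\subsetneq\Lambda'$ are lattices of finite (rational) index $m=[\Lambda':\Lambda]\ge 2$, with $\mathcal G(\varphi,\Lambda')=\mathcal G(\varphi,\Lambda)$ and $(\varphi,\Lambda)$ a Riesz basis for this common space.

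Next I would pass to the Zak transform side. In the rectangular case $\Lambda=\alpha\Z\times\beta\Z$ with $\alpha\beta=p/q\in\Q$ in lowest terms, the vector-valued Zak transform attached to $\Lambda$ (the fiberization of rational Gabor systems, in the spirit of Zibulski--Zeevi) turns $(\varphi,\Lambda)$ into a field of finitely many vectors in $\mathbb C^N$ whose entries are values of $Z_\alpha\varphi$, and it turns ``$(\varphi,\Lambda)$ is a Riesz basis for its span'' into the statement that the associated window matrix has constant full rank with uniform bounds; in particular the window vector never vanishes, so $Z_\alpha\varphi$ cannot vanish on a whole fiber. Feeding the relation $\pi(\mu)\varphi=\sum_{n,m}c_{n,m}\,\pi(\alpha n,\beta m)\varphi$, $(c_{n,m})\in\ell^2$, through this transform produces a functional equation for $Z_\alpha\varphi$ which \emph{upgrades} its quasiperiodicity: beyond the inherent relations $Z_\alpha\varphi(x+\alpha,\omega)=e^{2\pi i\omega\alpha}Z_\alpha\varphi(x,\omega)$ and $1/\alpha$-periodicity in $\omega$, the extra rational non-lattice shift $\mu$ forces $Z_\alpha\varphi$ to obey a \emph{finer} quasiperiodicity in $x$ than is compatible with the $\Lambda$-imposed period in $\omega$. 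For the first, general statement one runs the same argument with the vector-valued Zak transform attached to the rational lattice $\Lambda'$, using $\varphi\in S_0(\R)$ only to guarantee that the corresponding window is continuous.

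The contradiction is then topological, in the spirit of the winding-number proof of the Amalgam Balian--Low Theorem. The finer quasiperiodicity is incompatible with the combination of continuity of $Z_\alpha\varphi$ and the Riesz-basis lower bound. Concretely: in the cases where the new quasiperiodicity is ``uniform'', the Benedetto--Heil--Walnut computation of the winding number of $Z_\alpha\varphi$ around $0$ along the boundary of a fundamental rectangle for the new structure yields a proper fraction, which is impossible for a continuous non-vanishing loop; in the remaining cases, continuity at the ``seams'' of the new quasiperiodicity, together with the Riesz bound, forces $Z_\alpha\varphi$, and hence the window vector, to vanish identically on an entire fiber, contradicting the Riesz-basis property. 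Either way no $\mu\notin\Lambda$ with $\pi(\mu)\varphi\in\mathcal G(\varphi,\Lambda)$ can exist. For the rectangular refinement, this whole argument uses nothing about $\varphi$ beyond continuity of $Z_\alpha\varphi$, which is exactly the weakened hypothesis allowed in the second part of the theorem.

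The hard part I expect is this last step. Depending on the index $m=[\Lambda':\Lambda]$ and on the shape of $\Lambda'$, the obstruction appears in different guises --- a non-integral winding number, or a forced zero of the window --- and already the smallest cases (for instance $\Lambda=\alpha\Z\times\beta\Z$ with $\alpha\beta=\tfrac12$) exhibit both behaviours; finding one clean invariant of the line (or vector) bundle that $\mathcal G(\varphi,\Lambda)$ determines over the torus which captures the contradiction uniformly is the technical crux. A secondary, more routine difficulty is making the reduction of the first paragraph airtight for every possible closed subgroup $\overline{\Lambda'}$ and every rational density, and checking that the metaplectic reductions used there are compatible with the hypothesis $\varphi\in S_0(\R)$.
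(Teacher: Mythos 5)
Your overall architecture matches the paper's: argue by contradiction, use invariance under the group generated by $\Lambda$ and $\mu$ and its closure to reduce to a shift that is rational over $\Lambda$, normalize the lattice metaplectically to a rectangular one of rational density, pass to the (scalar or Zibulski--Zeevi vector-valued) Zak transform, and look for a topological obstruction. The reduction steps in your first paragraph are essentially Steps 1--2 of the paper (your shortcut through the classical Amalgam Balian--Low Theorem in the case $\overline{\Lambda'}=\R\times\widehat\R$ is a legitimate variant). The problem is the crux, which you yourself flag as open: your concrete proposals for producing the contradiction do not work as stated. A winding-number computation for $Z_\alpha\varphi$ ``along the boundary of a fundamental rectangle'' is not available here, because $(\varphi,\Lambda)$ is only a Riesz basis for a \emph{proper} subspace: the Riesz bounds control the fiber vector $\bigl(Z\varphi(x-\tfrac pP,\omega)\bigr)_{p=0}^{P-1}$ from below, not $Z\varphi$ itself, so $Z\varphi$ can (and in the paper's Examples does) vanish on a nontrivial set, and no continuous nonvanishing loop exists to wind. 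Likewise, a forced zero of $Z\varphi$ at points, or even along curves, contradicts nothing; only vanishing of an entire fiber vector would, and you have not shown how the ``finer quasiperiodicity'' forces that.

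The paper's resolution is to move the topological argument off the window and onto the \emph{multiplier}: from $\pi(u,\eta)\varphi=\sum c_\lambda\pi(\lambda)\varphi$ one gets $Z\pi(u,\eta)\varphi=h\cdot Z\varphi$ (for $Q=1$) with $h$ being $1/P$-periodic in $x$ and $1$-periodic in $\omega$, and iterating $R$ times yields $\prod_{r=1}^{R}h(x+ru,\omega+r\eta)=e^{2\pi i(M_1x-M_2\omega)}$, first on $\supp Z\varphi$, then a.e.\ by the Riesz lower bound on the fibers, and then everywhere after proving $h$ is \emph{continuous} (using that the zero set of $Z\varphi$ is $(u,\eta)$-periodic, so one can solve for $h$ at every point from a nonvanishing fiber entry). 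A short continuous-argument lemma (Proposition~\ref{prop:key1}) then shows such an $h$ forces $RP\mid M_1$ and $R\mid M_2$, i.e.\ $(u,\eta)\in\Lambda$. For $Q>1$ the same is done with the matrix symbol $\widetilde H(x,\omega)$ of the vector Zak relation, taking $h=\det\widetilde H$, with continuity of $\widetilde H$ obtained from a continuously varying dual frame (right inverse) of the fiber synthesis matrix. These steps --- the pointwise multiplier identity, its extension from $\supp Z\varphi$ to all of $\R\times\widehat\R$, and especially the continuity of $h$ (respectively $\det\widetilde H$) --- are exactly the missing content in your sketch; without them the ``incompatibility of the finer quasiperiodicity with continuity and the Riesz bound'' remains an assertion rather than a proof.
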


Theorem~\ref{thm:maintheorem} generalizes the Amalgam Balian Low Theorem stated above. Indeed, $(\varphi,\Lambda)$ being a Riesz basis for  $L^2(\R)$ implies that the density of $\Lambda$ equals $1$, that is, $(\alpha\beta)^{-1}=1\in\Q$ in case $\Lambda=\alpha\Z\times\beta\Z$, and $\mathcal G(\varphi,\Lambda)=L^2(\R)$  implies that $\pi(u,\eta)\varphi\in \mathcal G(\varphi,\Lambda)$ for all $(u,\eta)\in\R \times \widehat \R$, so Theorem~\ref{thm:maintheorem} implies that $\varphi\notin S_0(\R)$.    

\begin{remark}\rm
The question of whether the condition $\varphi \in S_0(\R)$ in Theorem~\ref{thm:maintheorem} can be replaced with having a finite uncertainty product \eqref{eqn:uncertainty-product}  is left for further exploration. Similarly, we do not discuss the case of $\Lambda$ having irrational density in this paper.

To generalize our proof of Theorem~\ref{thm:maintheorem} to a higher dimensional setting, that is, $\varphi\in L^2(\R^d)$ and  $\Lambda \subset  \R^{2d}$, requires a restriction to $\Lambda$ being a so-called symplectic lattices in order to use intertwining operators to reduce the general problem  to lattices of the form
$\alpha_1\Z\times \ldots \times \alpha_d \Z \times\beta_1\Z\times\ldots \times\beta_1\Z$ \cite{gro01}.
\end{remark}

Our investigation is motivated in part by the following.  In orthogonal frequency division multiplexing, short, OFDM, information in form of a coefficient sequence $\{c_{k,\ell}\}_{k\in\Z, \ell \in I}$ is transmitted through a channel using the signal 
$$F\{c_{k,\ell}\}=\sum_{k\in\Z}\sum_{\ell \in I} c_{k,\ell}T_{k\alpha}M_{\ell \beta}\varphi.$$ The  index set $I$  depends on the for transmission available frequency band and is therefore finite in most OFDM applications.  For $F$ to  be boundedly invertible, $\varphi$ is chosen so that $(\varphi,\alpha\Z\times \beta \Z)$ is a Riesz basis for its closed linear span. Moreover, to utilize a communications channel efficiently, it is beneficial to choose $\varphi$ with good decay in time and in frequency, that is, $\varphi\in S_0(\R)$, or better, $\varphi$ is a Schwartz class function.

Theorem~\ref{thm:maintheorem} then implies that under these conditions, $\pi(u,\eta)\varphi \notin \mathcal G(\varphi,\alpha\Z\times \beta \Z)$ whenever $(u,\eta)\notin \alpha\Z\times \beta \Z$.  Unfortunately, distortions that the signal undergoes are time-shifts (delays of the signal) in case of time-invariant channels, or time-frequency shifts in case of mobile, time-varying communications channels.   Theorem~\ref{thm:maintheorem} shows that we cannot choose $\varphi\in S_0(\R)$ so that the transmission space $\G(\varphi,\alpha\Z\times\beta\Z)$ is invariant under perturbations $\pi(u,\eta)$ for $(u,\eta)\notin \alpha\Z\times \beta \Z$.

In some cases, the leakage out of $\G(\varphi,\alpha\Z\times \beta \Z)$ can be used to identify an unknown channel operator $H$, in particular, if $H$ is well approximated by a single time-frequency shift $\pi(u,\eta)$ \cite{KP06,PW06b,Pfa13}. Unfortunately,  $\pi(u,\eta)\varphi\notin \mathcal G(\varphi, \Lambda)$ for all $(u,\eta)\notin \Lambda$  and $(\varphi, \Lambda)$ is a Riesz sequence for $G(\varphi, \Lambda)$ does not imply that there is no $f\in G(\varphi, \Lambda)$ with $\pi(u,\eta)f\in G(\varphi, \Lambda)$, so a receiver would not be able to know whether $\pi(u,\eta)f$ was transmitted through the identity operator, or $f$ was transmitted and then perturbed by the operator $\pi(u,\eta)$.\footnote{For example, if $g$ is a Gaussian, we have $( g, \mathbb Z \times \frac 3 2 \mathbb \Z)$ is a Riesz basis for $\G( g, \mathbb Z \times \frac 3 2 \mathbb \Z)$ since the density of  $\mathbb Z \times \frac 3 2 \mathbb \Z$ is $\frac 2 3<1$, see \cite{Hei07} and references therein. It is then not difficult to construct $f \neq 0$ such that $f,\pi(\frac 1 2,0)f\in \G( g, \mathbb Z \times \frac 3 2 \mathbb \Z)$.}
\subsubsection*{Related work}
Aldroubi, Sun and Wang   showed that if a principal shift-invariant space on the real line is also translation-invariant, that is, invariant under {\em every} translation operator, then any of
its  Riesz generators are non-integrable. Moreover, if the generator of the shift-invariant space is also invariant under the translate by $\frac{1}{n}$, $n\in \N\setminus \{1\}$, then  $\int |x|^{1+\epsilon} |\varphi (x)|^2\, dx =\infty$ for all $\epsilon>0$ \cite{ASW11}.

Gabardo and Han showed that if $\Lambda=\alpha\Z\times\beta\Z$ has integer density $(\alpha\beta)^{-1}\geq 2$ and $\mathcal G (\varphi,\alpha\Z\times\beta\Z)\neq L^2(\R)$, then 
\eqref{eqn:uncertainty-product} holds.  In the reciprocal case, they show that if $\alpha\beta \in \N\setminus\{1\}$ and $(\varphi,\alpha\Z\times\beta\Z)$ is not a Riesz system for its closed linear span, then again \eqref{eqn:uncertainty-product} holds. Note that both cases do not represent the generic case  \cite{GH04}

Gr\"ochenig, Han, Heil, and Kutyniok show that if $(\varphi,\Lambda)$ and $(\widetilde \varphi,\Lambda)$ are biorthogonal Riesz basis for $\mathcal G (g,\Lambda)$, then \eqref{eqn:uncertainty-product} holds for either $\varphi$ or $\widetilde \varphi$ \cite{GHHK02}.

For general Balian Low type results, we refer the reader to \cite{ BHW92, BHW95, BHW98, Bw94, DLL95, FG97, Jan08}.

\subsubsection*{Organisation of the paper}   In Section~\ref{section:Notation} we  discuss   our main tool, the Zak transform. We then proceed to prove  Theorem~\ref{thm:maintheorem}  in Section~\ref{section:proof}; and in Section~\ref{examples} we construct functions that generate Gabor spaces containing additional shifts of the generator.

\section{The Zak transform}\label{section:Notation}

The analysis offered below is based on the   Zak transform which is densely defined on $L^2(\R)$ by

\begin{align*}
 Z_\alpha f(x,\omega)&=\sum_{k\in\Z} f(x+\alpha k)\, e^{-2\pi i \alpha  k \omega}, \quad (x,\omega)\in \R\times \widehat \R,
\end{align*}
where $\alpha>0$. We write $Zf(x,\omega) = Z_1f(x,\omega).$
 
 It is easily observed that 
$$Z f(x+  n,\omega)=e^{2\pi i n  \omega}Z f(x,\omega), \quad Z f(x,\omega+m)=Z f(x,\omega),$$ 
in short, $Zf$ is quasiperiodic.   Not only does $Z f$ on $  [0,1]\times[0,1]$ fully describe $f$, but we have $  \|Z f\|_{L^2 ([0,1]\times[0,1])}=\|f\|_{L^2(R)}$, that is, $Z$ is a unitary map onto the space of quasiperiodic functions on $\R\times\widehat \R$ where the latter is equipped with the $L^2 ([0,1]\times[0,1])$ norm.

We shall utilize the fact that with $\pi(u,\eta)=M_\eta T_u$ we have
\begin{align*}
 \big(Z \pi(u,\eta)f\big) (x,\omega)& = \sum_{k\in\Z} \big(\pi(u,\eta)f\big)(x+ k)\, e^{-2\pi i k \omega }\\
 & = \sum_{k\in\Z} e^{2\pi i (x+ k)\eta }f(x+ k-u)\, e^{-2\pi i k  \omega}\\
 & = e^{2\pi i x\eta }\sum_{k\in\Z} f(x-u+ k)\, e^{-2\pi i k  (\omega-\eta)  }\\
 & = e^{2\pi i \eta x}Z f(x-u,\omega-\eta). 
 \end{align*}
 In particular, we have  for $k,\ell\in\Z$ that 
 \begin{align*}
 \big(Z  \pi(k,\ell)f\big) (x,\omega)=e^{2\pi i \ell x }Z f(x- k,\omega-\ell)=e^{2\pi i (\ell x +k \omega)}Z f(x,\omega), 
 \end{align*}
where we used the quasiperiodicity of the Zak transform.

Note that $S_0(\R)$ is invariant under the Fourier transform, so $\varphi \in S_0(\R)$ if and only if $\widehat \varphi \in S_0(\R)$.
The key property of $S_0(\R)$ that we use is that $\varphi \in S_0(\R)$ implies $Z\varphi$ continuous.  Indeed, if  $\varphi$ is in the Wiener Amalgam space 
$$W(C(\R),l^1(\Z))=\{f\in L^2(\R) \text{ continuous with }\ \sum_{k\in\Z}\|f\|_{L^\infty([k,k+1])}<\infty \}\supset S_0(\R),$$ then the sum defining the Zak transform converges uniformly, so the given continuity of $\varphi\in S_0(\R)$ implies continuity of its Zak transform.  Note that  $\varphi \in W(C(\R),l^1(\Z))$  is not necessary for the Zak transform to be continuous. In Theorem~\ref{thm:maintheorem} we therefore offer the two conditions  $\varphi\in S_0(\R)$ and, if $\Lambda=\alpha\Z \times \beta \Z$, more generally, the scaled Zak transform $Z_\alpha \varphi$, that is, the Zak transform adjusted to the   lattice $\alpha\Z \times \beta \Z$, is continuous.

\section{Proof of Theorem~\ref{thm:maintheorem}}\label{section:proof}

The proof is by contradiction. Let $\Lambda\in\R\times \widehat \R$ be a discrete subgroup of rational density. Assume there exists $\varphi \in S_0(\R)$,  such that $(\varphi,\Lambda)$ is a Riesz basis for its closed linear span $\G(\varphi,\Lambda)$, and assume further that there is an element $(u,\eta)\in\R\times \widehat \R\setminus \Lambda$  with $\pi(u,\eta)\varphi \in  \G(\varphi,\Lambda)$. 

\subsubsection*{Step 1. Without loss of generality $\Lambda=\frac 1 Q \Z \times P\Z$ with $P,Q\in\N$}

 Clearly, any generic full rank lattice $\Lambda$ of density $\frac P Q$ can be written as $\Lambda=A(\frac 1 Q\Z\times P\Z)$ with $A\in\R^{2\times 2}$, $\det A=1$. Since any $A\in\R^{2\times 2}$ with $\det A=1$ is element of the symplectic group, there exists  a so-called metaplectic operator $U=U(A)$ with $U^\ast \pi(\frac m Q,  n P) U =\pi(A(\frac m Q,nP)^T)$ \cite{gro01}. The metaplectic operator $U$ is unitary, hence, $(\varphi,\Lambda)$ is a Riesz basis for its closed linear span $\G(\varphi,\Lambda)$ if and only if $(U\varphi,\frac 1 Q \Z \times P\Z)$ is a Riesz basis for its closed linear span $\G(U\varphi,\frac 1 Q \Z \times P\Z)$. Moreover, $\pi(u,\eta)\varphi\in \G(\varphi,\Lambda)$ implies for some sequence $\{c_\lambda\} \in \ell^2(\Lambda)$
\begin{align*}U^\ast\pi(A^{-1}(u,\eta)^T)U\varphi &=
\pi(AA^{-1}(u,\eta)^T)\varphi= \pi(u,\eta)\varphi 
=\sum_{\lambda\in\Lambda}c_\lambda \pi(\lambda)\varphi \\
&=\sum_{m,n\in\Z}c_{m,n} \pi(A(\tfrac m Q,  n P)^T)\varphi =\sum_{m,n\in\Z}c_{m,n} U^\ast \pi(\tfrac m Q,  n P)U\varphi.
\end{align*}
We summarize that  with $(\widetilde u, \widetilde v)=A^{-1} (u,\eta)^T\notin \frac 1 Q \Z \times P\Z$ since $(u,\eta)\notin \Lambda$, and $\widetilde \varphi=U\varphi\in S_0(\R)$ by invariance of $S_0(\R)$ under metaplectic operators \cite{gro01}, we have  $\pi(\widetilde u,\widetilde \eta)\widetilde \varphi \in \G(\widetilde \varphi,\frac 1 Q \Z \times P\Z)$.

If ${\rm density}\, \Lambda =0$,  then we can increase $\Lambda$ to a full rank lattice, maintaining the property that $(\varphi,\Lambda)$ is a Riesz sequence for its closed linear span.  The argument above is then applicable.

\subsubsection*{Step 2. Without loss of generality, we can choose $u$ and $\eta$ to be rational.}

Clearly, this is equivalent to the existence of $R\in\N$ with $R\cdot(u,\eta)\in \frac 1 Q \Z \times P\Z$. 

We proceed by showing that if there exists  $(u,\eta)\in \R\times \widehat \R$ with $\pi(u,\eta)\varphi \in \G(\varphi,\frac 1 Q \Z \times P\Z)$, then exists also a rational pair $(\widetilde u,\widetilde \eta)\in \R\times \widehat \R$ with $\pi(\widetilde u, \widetilde \eta)\varphi \in \G(\varphi,\frac 1 Q \Z \times P\Z)$.

First, observe that $\pi(u,\eta)\varphi \in \G(\varphi,\frac 1 Q \Z \times P\Z)$ implies that $\G(\varphi,\frac 1 Q \Z \times P\Z)$ is invariant under both, $\pi(u,\eta)$ and $\pi(\frac m Q, nP)$, $m,n\in\Z$, and therefore,    $\G(\varphi,\frac 1 Q \Z \times P\Z)$ is invariant under $\pi(\lambda)$ where $\lambda$ is in the group $\widetilde \Lambda$ generated by $(u,\eta)$ and $\frac 1 Q \Z \times P\Z$. Moreover, we have $\pi(\lambda)\varphi \in \mathcal G(\varphi,\Lambda)$ for all  $\lambda\in {\rm closure}\, \widetilde \Lambda \subseteq \R\times \widehat \R$. 

If, $u$ is irrational, then ${\rm closure}\, \widetilde \Lambda$ contains $\R\times\{\eta\}$ and we  can replace $(u,\eta)\notin \frac 1 Q \Z \times P\Z$ by $(\widetilde u,\eta) \in {\rm closure}\, \widetilde \Lambda\setminus \frac 1 Q \Z \times P\Z$ with  $\widetilde u\in\Q$. With the same argument, we are able to  replace an irrational $\eta$ with a rational number $\widetilde \eta\notin P\Z$.  
 
\subsubsection*{Step 3. The case $Q=1$. }

Choose  $R\in\N$ with $(Ru,R\eta)\in \Z\times P \Z$. Set $M_2=Ru$ and $M_1=R\eta$, by increasing $R$ we can assume that $M_2\eta/2$ is an integer and $P$ divides $M_1$.

We have  $\pi(u,\eta)\varphi \in \G(\varphi,\Z\times P\Z)$ if and only if 
$$
e^{2\pi i \eta x}Z \varphi(x-u,\omega-\eta) =\big(Z \pi(u,\eta)\varphi\big) (x,\omega) 
 \in  Z\G(\varphi,\Z\times P\Z).$$
 But
 \begin{align*}
 Z\G(\varphi,\Z\times P\Z) & = \overline\spa\{Z \pi(\lambda)\varphi,\ \lambda \in \Z\times P\Z\} \\
 & \ \ =\overline\spa\{e^{2\pi i (P\ell x+k\omega)}Z\varphi(x,\omega),\ (k,\ell)\in \Z\times \Z\}.
\end{align*}
So $\pi(u,\eta)\varphi \in \G(\varphi,\Z\times P\Z)$ if and only if there exist a sequence $c=(c_{k,\ell})\in \ell^2(\Z^2)$ with \begin{align*}
 e^{2\pi i \eta x }Z \varphi(x-u,\omega-\eta)  &= \sum_{k,\ell\in\Z}c_{k,\ell}\, e^{2\pi i (P\ell x+k\omega)}Z\varphi(x,\omega)
 \\
 &= h(x,\omega) Z\varphi(x,\omega), \quad (x,\omega)\in \R\times \widehat \R,
 \end{align*} 
where
\begin{align*}
 h(x,\omega)=\sum_{k,\ell\in \Z}c_{k,\ell}\, e^{2\pi i (P\ell x+k\omega)}
\end{align*}
  is a locally $L^2$ function which is $1/P$ periodic in $x$ and 1 periodic in $\omega$. Note that the construction of $h$ is based on the assumption that $(\varphi,\Z\times P\Z)$ is a Riesz basis for its closed linear span. 
Hence
 \begin{align}\label{eqn:rightshift}
 Z \varphi(x,\omega)  &= e^{-2\pi i \eta  (x+u)} h(x+u,\omega+\eta) Z\varphi(x+u,\omega+\eta), \quad (x,\omega)\in \R\times \widehat \R. \end{align}

The above together with the quasiperiodicity of the Zak transform implies
\begin{align*} 
&Z\varphi (x,\omega) 
  = e^{-2\pi i \eta  (x+u)}\, h(x   +u,\omega  +\eta)  \, Z\varphi(x   +u,\omega +\eta) \\
  & = e^{-2\pi i \eta  (x+u)} \, h(x  +u,\omega +\eta)  \, e^{-2\pi i \eta  (x+2u)}\, h(x  +2u,\omega +2\eta)  \, Z\varphi(x  +2u,\omega +2\eta) \\
 & = \ldots =Z\varphi(x+Ru,\omega+R\eta)\ \exp\Big(-2\pi i \eta (Rx+u\sum_{r=1}^{R}r) \Big)  \,  \prod_{r=1}^{R} h(x+ru,\omega+r\eta)  \\
  & = e^{2\pi i M_2\omega}\, Z\varphi(x,\omega )\ \exp\Big(-2\pi i ( M_1 x+M_2\eta (R+1)/2) \Big)  \,  \prod_{r=1}^{R} h(x+ru,\omega+r\eta)  \\
  & =  \, e^{2\pi i (M_2\omega - M_1 x)}\, Z\varphi(x,\omega ) 
  \prod_{r=1}^{R} h(x+ru,\omega+r\eta)  
 ,  \quad (x,\omega)\in \R\times \widehat \R,
 \end{align*} 
 where we used that  $M_2\eta/2$ is an integer. 
   
Hence $h$ satisfies the  quasiperiodicity condition
 \begin{align}\label{eqn:product1}
 \prod_{r=1}^{R} h(x+ru,\omega+r\eta) = e^{2\pi i (M_1x - M_2\omega )}
   ,  \quad (x,\omega)\in \supp Z\varphi \subseteq \R\times \widehat \R. \end{align}

Equation \eqref{eqn:product1}  holds a-priori only on $\supp Z\varphi$, we shall now extend it to hold on all of $\R \times \widehat \R$ based on the assumption that   $(\varphi,\alpha\Z\times \beta\Z)$ is a Riesz sequence for its closed linear span.

Indeed, a standard periodization trick gives
\begin{align*}
  \int_\R \Big|\sum_{k,\ell\in \Z} d_{k,\ell}\pi(k,P\ell) \varphi (x)\Big|^2\, dx & = \int_0^1\int_0^1 \Big|\sum_{k,\ell\in \Z} d_{k,\ell}\,e^{2\pi i (P\ell t-k\omega)}Z \varphi (t,\nu)\Big|^2\, dt\, d\nu \\
  & = \int_0^1\int_0^{1/P} \Big|\sum_{k,\ell\in \Z} d_{k,\ell}\,e^{2\pi i (P\ell t-k\omega)}\Big| \sum_{p=0}^{P-1}\Big| Z \varphi (t-\tfrac p P,\nu)\Big|^2\, dt\, d\nu, 
\end{align*}
 and, hence, we have that 
$(\varphi,\Z\times P\Z)$ is  a Riesz sequence if and only if 
\begin{align*}
 A\leq \sum_{p=0}^{P-1}\Big| Z \varphi (x-\tfrac p P,\omega)\Big|^2 
\leq B,\quad a.e.\, (x,\omega),
\end{align*}
for some $0<A\leq B<\infty$.
So, for almost every $x_0,\omega_0$  exists  $p_0\in \{0,1,\ldots, P-1\}$ so that 
$$ Z\varphi(x_0-\frac  {p_0} P, \omega_0)\neq 0.$$ 
Using the computations above, we have
\begin{align*} 
Z\varphi (x_0-\frac  {p_0} P,\omega_0)&=  Z\varphi(x_0-\frac {p_0} P,\omega_0 ) e^{2\pi i (M_2\omega_0 - M_1(x_0-\frac {p_0} P))}\,  
  \prod_{r=1}^{R} h(x_0-\frac  {p_0} P+ru,\omega_0+r\eta) \\
  &=  Z\varphi(x_0-\frac  {p_0} P,\omega_0 ) e^{2\pi i (M_2\omega_0 - M_1x_0)}\,  
  \prod_{r=1}^{R} h(x_0+ru,\omega_0+r\eta),  
 \end{align*} 
 where we used the fact that $h$ is $\frac 1 P$ periodic in $x$ and $P$ divides $M_1$. 
As $Z\varphi (x_0-\frac  {p_0} P,\omega_0)\neq 0$, we  have indeed
 $$
  \prod_{r=1}^{R} h(x_0+ru,\omega_0+r\eta) = e^{2\pi i (M_1 x_0-M_2\omega_0 )}.
 $$
 As $(x_0,\omega_0)$ was chosen arbitrarily (a.e.), we conclude \eqref{eqn:product1} holds for almost every $(x_0,\omega_0)$.
 
Moreover, observe that \eqref{eqn:rightshift} implies that the zero set of $Z\varphi$ is $(u,\eta)$ periodic, hence if $(x_0, \omega_0)$ satisfies $Z\varphi(x_0 - \frac{p_0}{P}, \omega_0) \not= 0$, we have
 \begin{align*} 
0&\neq Z\varphi(x_0 - \frac{p_0}{P}, \omega_0) = Z\varphi (x_0-u-\frac  {p_0} P,\omega_0-\eta) \\ 
 & = e^{-2\pi i \eta  (x_0-u-\frac  {p_0} P+u)}\, h(x_0-u-\frac  {p_0} P   +u,\omega_0-\eta  +\eta)  \, Z\varphi(x_0-u-\frac  {p_0} P   +u,\omega_0 -\eta+\eta)  \\ 
 & = e^{-2\pi i \eta  (x_0-\frac  {p_0} P)}\, h(x_0,\omega_0)  \, Z\varphi(x_0-\frac  {p_0} P   ,\omega_0).
 \end{align*} 
 Solving for $h(x_0,\omega_0)$  implies that $h(x,\omega)$ is continuous 
 on   $\R \times \widehat \R$, and therefore \eqref{eqn:product1} holds on all of $\R \times \widehat \R$.

The proof of the case $Q=1$  is completed, by proving in Step 4 that a function $h$ as constructed above does not exist.  

\subsubsection*{Step 4. Periodicity vs quasiperiodicity and conclusion of the case $Q=1$} 
 Proposition~\ref{prop:key1} below is an extension of the simple fact that  if $h(x)$ is a function satisfying  $e^{2\pi i Mx}=\prod_{r=1}^R h(x+r0)=h(x)^R$, then $h(x)\neq 0$ for all $x$. If further $h$ is $\frac{1}{P}$-periodic, then 
$$
	h(x)=h(x+\tfrac 1 P)=e^{2\pi i \frac M R (x+ \tfrac 1 P)}=e^{2\pi i \frac M {RP} }\,h(x),
$$
 and, hence,  $RP$ divides $M$.

 \begin{proposition} \label{prop:key1} Let $P_1,P_2,R\in\N$, $M_1,M_2 \in\Z$, and $u,\eta\in\R$. If $h(x,\omega)$ is continuous on $\R \times \widehat \R$, $1/P_1$ periodic in $x$, $1/P_2$ periodic in $\omega$ and
 \begin{align}\label{eqn:productbadinprop}
 e^{2\pi i  (M_1 x +M_2 \omega)} = \prod_{r=0}^{R-1} h(x+r u,\omega + r\eta),  \quad (x,\omega)\in  \R\times \widehat \R,
 \end{align} 
 then   $RP_1$ divides $M_1$ and  $RP_2$ divides $M_2$.
\end{proposition}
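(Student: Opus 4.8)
The plan is to reduce the two-variable statement to the one-variable toy case sketched just before the proposition, by taking iterated products and exploiting the periodicities. First I would take the product of the defining relation \eqref{eqn:productbadinprop} along the orbit: replacing $(x,\omega)$ by $(x+ju,\omega+j\eta)$ for $j=0,1,\dots,R-1$ and multiplying, every factor $h(x+ru,\omega+r\eta)$ with $0\le r\le R-1$ appears, but with varying multiplicities — more precisely $h(x+\ell u,\omega+\ell\eta)$ appears with multiplicity $\min(\ell+1,R,2R-1-\ell)$ for $0\le\ell\le 2R-2$. So a cleaner move is the telescoping one: \eqref{eqn:productbadinprop} evaluated at $(x,\omega)$ and at $(x+u,\omega+\eta)$ gives
\[
	\frac{e^{2\pi i (M_1(x+u)+M_2(\omega+\eta))}}{e^{2\pi i (M_1 x+M_2\omega)}} = \frac{h(x+Ru,\omega+R\eta)}{h(x,\omega)},
\]
so $h(x+Ru,\omega+R\eta) = e^{2\pi i (M_1 u + M_2\eta)} h(x,\omega)$, i.e.\ $h$ is quasiperiodic with period $(Ru,R\eta)$. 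Combined with the genuine periodicities this is the engine of the argument.

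Next I would establish that $h$ vanishes nowhere: from \eqref{eqn:productbadinprop} the product of the $R$ values is unimodular, hence nonzero, so each factor is nonzero; since $(x,\omega)$ is arbitrary, $h(x,\omega)\neq 0$ everywhere. Because $h$ is continuous, nonvanishing, and $\R\times\widehat\R$ is simply connected, we may write $h = e^{2\pi i \psi}$ for a continuous real-valued $\psi$ on $\R\times\widehat\R$ (a global continuous logarithm). Then \eqref{eqn:productbadinprop} becomes the additive identity $M_1 x + M_2\omega \equiv \sum_{r=0}^{R-1}\psi(x+ru,\omega+r\eta) \pmod 1$, and by continuity in $(x,\omega)$ starting from a point where both sides are integers, we get the genuine equality
\[
	\sum_{r=0}^{R-1}\psi(x+ru,\omega+r\eta) = M_1 x + M_2\omega + c
\]
for a constant $c\in\Z$ (absorbing $c$ into $\psi$, take $c=0$). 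The periodicities of $h$ translate into: $\psi(x+\tfrac1{P_1},\omega) - \psi(x,\omega) \in \Z$ and $\psi(x,\omega+\tfrac1{P_2}) - \psi(x,\omega)\in\Z$, and by continuity these integer-valued functions are constants, say $a_1$ and $a_2$ respectively.

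Finally I would plug the periodicity shifts into the additive identity. Replacing $x$ by $x+\tfrac1{P_1}$ in the displayed sum and subtracting the original gives $\sum_{r=0}^{R-1}\bigl(\psi(x+\tfrac1{P_1}+ru,\omega+r\eta)-\psi(x+ru,\omega+r\eta)\bigr) = M_1/P_1$; but each summand equals $a_1$, so $R a_1 = M_1/P_1$, i.e.\ $M_1 = R P_1 a_1$, so $RP_1 \mid M_1$. The same computation with the $\omega$-shift yields $M_2 = RP_2 a_2$, so $RP_2\mid M_2$. This completes the proof.

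The step I expect to be the main (though modest) obstacle is the passage from the $\bmod\ 1$ identity coming directly from $|h|=1$ to the genuine equality with an integer constant, and more specifically justifying the global continuous logarithm and that the "period defects" $\psi(x+\tfrac1{P_1},\omega)-\psi(x,\omega)$ are truly constant rather than merely locally constant — both rest on continuity of $h$ together with connectedness of $\R\times\widehat\R$, which is why the hypothesis that $h$ is continuous on all of $\R\times\widehat\R$ (secured in Step 3) is essential. An alternative to the logarithm that avoids topology: iterate the telescoping relation $R$-fold to get $h(x+R u,\omega+R\eta)=e^{2\pi i(M_1 u+M_2\eta)}h(x,\omega)$, combine with $\tfrac1{P_1}$-periodicity to deduce a winding/counting relation directly; but the logarithm route is the cleanest and I would present that.
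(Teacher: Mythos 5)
Your argument is essentially the paper's own: a continuous choice of phase for the nonvanishing continuous $h$, upgrading the resulting mod-$1$ identity to an exact identity with an integer constant via continuity and connectedness, observing that the periodicity defects of the phase are integer constants, and plugging the $1/P_1$- and $1/P_2$-shifts into the exact identity to read off $M_1=RP_1a_1$, $M_2=RP_2a_2$; the paper phrases this with $\arg h$ and unit shifts ($\arg h(x,\omega)-\arg h(x+1,\omega)=P_1q_1$, etc.), which is the same computation. One small inaccuracy: you write $h=e^{2\pi i\psi}$ with $\psi$ real, which presupposes $|h|\equiv 1$; the hypotheses only give $\prod_{r=0}^{R-1}|h(x+ru,\omega+r\eta)|=1$, and $|h|$ can genuinely differ from $1$ (e.g.\ $R=2$, $u=\tfrac1{2P_1}$, $\eta=0$, $h(x,\omega)=\exp(\cos(2\pi P_1x))$ satisfies all hypotheses with $M_1=M_2=0$). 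The repair is immediate and leaves your argument untouched: write $h=|h|e^{2\pi i\psi}$ with $\psi$ a continuous argument (which exists since $h$ is continuous, nonvanishing, and the domain is simply connected), take absolute values in \eqref{eqn:productbadinprop} to get $\prod_r|h(x+ru,\omega+r\eta)|=1$, and then the phase identity $\sum_r\psi(x+ru,\omega+r\eta)\equiv M_1x+M_2\omega \pmod 1$ holds as you use it; from there your conclusion $RP_1\mid M_1$, $RP_2\mid M_2$ follows exactly as in the paper. The preliminary telescoping relation $h(x+Ru,\omega+R\eta)=e^{2\pi i(M_1u+M_2\eta)}h(x,\omega)$ is correct but not needed for the main line.
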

Before giving a proof, let us first use Proposition~\ref{prop:key1} to conclude the proof of Theorem~\ref{thm:maintheorem} for $\Lambda=\Z\times P\Z$. 

Using all assumptions, we have established the existence of a continuous  $h(x,\omega)$  which satisfies \eqref{eqn:product1} and is $1/P$ periodic in $x$, and $1$-periodic in $\omega$. Therefore \eqref{eqn:productbadinprop} is satisfied with
$$ M_1=R\eta,\quad M_2=-Ru, \quad  P_1=P\quad \text{and} \quad P_2=1.$$ 
Then Proposition~\ref{prop:key1} implies $M_1/(RP_1)\in \Z$, that is, $\eta=M_1/R\in P\Z$, and  $u=-M_2/R\in \Z$.  
We conclude that $(u,\eta)\in \Lambda=\Z\times P\Z$, a contradiction.

\begin{proof}[Proof of Proposition~\ref{prop:key1}]
\ 

 We have 
 \begin{align*}
 M_1 x + M_2\omega =  \sum_{r=0}^{R-1}  \arg h(x+ru,\omega+r\eta) \mod 1,  \quad (x,\omega)\in  \R\times \widehat \R,
 \end{align*} 
 where by  continuity of $h$, we can choose $\arg h(x,w)$ to be continuous as well. (Note that this necessitates the values of $\arg h$ to be real numbers, not only values in $[0,1)$.)
 
For $x=\omega=0$, we have $\sum_{r=0}^{R-1}  \arg h(ru,r\eta) = p\in \Z$. 

As $\arg h(x,w)$ is continuous, we have 
$$
\sum_{r=0}^{R-1}  \arg h(x+ru,\omega+r\eta) =p+M_1x+M_2\omega, \quad x,\omega \in \R\times \widehat \R.
$$  
Indeed, by varying $\omega$ (or $x$) by a small value,  $\sum_{r=0}^{R-1}  \arg h(x+ru,\omega+r\eta)-M_1x-M_2\omega$  can only vary marginally and not jump by an integer value.  We conclude in particular that (for $x=1, \omega=0$ and $x=0, \omega=1$ respectively)
$$\sum_{r=0}^{R-1}  \arg h(1+ru,r\eta) =p+M_1, \quad\sum_{r=0}^{R-1}  \arg h(ru,1+r\eta) =p+M_2.$$

But, now, $\arg h(0,0)-\arg h(1/P_1,0)=q_1\in \Z$ by $1/P_1$ periodicity of $\arg h(x,\omega)$ in $x$.  
Similarly to before,  $\arg h(x ,\omega)-\arg h(x+1/P_1,\omega)$ is an integer, and, this time by continuity in $x$ and $\omega$, we must have $\arg h(x,\omega)-\arg h(x+1/P_1,\omega)= q_1$ for all $x,\omega \in\R\times \hat\R$. Hence,   $\arg h(x,\omega)-\arg h(x+1,\omega)= P_1q_1$.
Similarly, $\arg h(x,\omega)-\arg h(x,\omega+1)= P_2q_2$  for all $x,\omega \in\R\times \hat\R$ where $q_2\in\Z$.

We conclude
\begin{align*}
p= \sum_{r=0}^{R-1}  \arg h(ru,r\eta)  =\sum_{r=0}^{R-1} \big( \arg h(ru+1,r\eta) +P_1q_1 \big)= p+ M_1+RP_1q_1,
\end{align*}
and 
\begin{align*}
p= \sum_{r=0}^{R-1}  \arg h(ru,r\eta)  =\sum_{r=0}^{R-1} \big( \arg h(ru,r\eta+1) +P_2q_2 \big)= p+M_2+RP_2q_2,
\end{align*}
that is, $RP_1q_1+M_1=0=RP_2q_2+M_2$, and the conclusion follows since $q_1,q_2\in\Z$. 
\end{proof}

\begin{remark}\rm
  If we drop the assumption that $Z\varphi$ is continuous but maintain the assumption that $(\varphi,\Lambda)$ is a Riesz sequence, then the arguments above  allow to construct an $L^2$ function $h$ satisfying \eqref{eqn:product1} a.e. on $\R\times\widehat \R$.  Then, Proposition~\ref{prop:key1} implies that $h$ is discontinuous, so $h$ is neither a trigonometric polynomial nor an absolutely convergent Fourier series. We conclude that whenever $\pi(u,\eta)\varphi \in \G(\varphi,\Lambda)$, $(\varphi,\Lambda)$ is a Riesz sequence, and $(u,\eta)\notin \Lambda$, then $\pi(u,\eta)\varphi$ has a slowly convergent series expansion in $(\varphi,\alpha \Z \times \beta \Z)$.
\end{remark}

\subsubsection*{Step 5. The rational case $\frac PQ \notin \N$.}

We choose again  $R\in\N$ with $(Ru,R\eta)\in  \Z \times P \Z$.  Set $M_2=Ru$ and $M_1=R\eta$, by increasing $R$ we can assume that $M_2\eta/2$ is an integer and $P$ divides $M_1$. 

We have  $\pi(u,\eta)\varphi \in \G(\varphi,\tfrac 1 Q\Z\times P \Z)$ if and only if 
\begin{align*}
e^{2\pi i \eta x}Z \varphi(x-u,\omega-\eta) &=\big(Z \pi(u,\eta)\varphi\big) (x,\omega)  \in  Z\G(\varphi,\tfrac 1 Q\Z\times P \Z).
\end{align*}
But
\begin{align*}
 Z\G(\varphi,\tfrac 1 Q\Z\times P \Z) &\ \ = \overline\spa\{Z \pi(\lambda)\varphi,\ \lambda \in \tfrac 1 Q\Z\times P \Z \} \\
 & \ \ =\overline\spa\{e^{2\pi i   \ell P x}Z\varphi(x-\tfrac k Q,\omega-\ell P),\ (k,\ell)\in \Z\times \Z\}
 \\
 & \ \ =\overline\spa\{e^{2\pi i   \ell P x }Z\varphi(x-\tfrac k Q,\omega),\ (k,\ell)\in \Z\times \Z\}.
\end{align*}
That is, if and only if there exist a sequence $c=(c_{k,\ell})\in \ell^2(\Z^2)$ with 
\begin{align*}
 e^{2\pi i \eta x }Z \varphi(x-u,\omega-\eta)  
 &= \sum_{k,\ell\in\Z}c_{k,\ell}\, e^{2\pi i   \ell P x }Z\varphi(x-\tfrac k Q,\omega)
 \\
 &= \sum_{q=0}^{Q-1}\sum_{k,\ell\in\Z}c_{q+kQ,\ell}\, e^{2\pi i (\ell P  x+k\omega)}Z\varphi(x- \tfrac q Q,\omega )
 \\
  &= \sum_{q=0}^{Q-1} h_q(x,\omega)\, Z\varphi(x- \tfrac q Q,\omega ),
 \quad (x,\omega)\in \R\times \widehat \R,
 \end{align*} 
 that is 
 \begin{align}\label{eqn:rational}
 Z \varphi(x,\omega)  &= e^{-2\pi i \eta  (x+u)} \sum_{q=0}^{Q-1}\, h_q(x+u,\omega+\eta)\, Z\varphi(x+u - \tfrac q Q,\omega+\eta ), \quad (x,\omega)\in \R\times \widehat \R, \end{align}
 where
\begin{align*}
 h_q(x,\omega)=\sum_{k,\ell\in \Z}c_{q+kQ,\ell }\, e^{2\pi i (P\ell x+k\omega)}
\end{align*}
are  locally $L^2$ functions which are $1/P$ periodic in $x$ and 1 periodic in $\omega$. (Note that we can assume that all $h_q$ are locally in $L^2$, since $(\varphi,\Lambda)$ is a Riesz system.)

Following Zeevi and Zibulski (see \cite{KZZ04, ZZ97, ZZ93})we set
$$\mathcal Z \varphi(x,\omega) = \big(Z \varphi(x,\omega), Z \varphi(x-\tfrac 1 Q,\omega),
Z \varphi(x-\tfrac 2 Q,\omega),\ldots, Z \varphi(x-\tfrac {Q-1} Q,\omega)\big)^T, $$ but extend it quasiperiodically to an infinite vector $\mathcal Z^\circ \varphi(x,\omega)$, that is,  for $p=sQ+r$, $r\in \{0,1,\ldots, Q-1\}$, $s\in\Z$, we have
$$\mathcal Z^\circ_{p} \varphi(x,\omega)  
 	=  Z \varphi(x-\tfrac p Q,\omega)= e^{-2\pi i s \omega} \mathcal Z^\circ_r(x,\omega)= e^{-2\pi i s \omega} \mathcal Z_r(x,\omega).$$
The above translates then into 
 \begin{align*}
 \mathcal Z^\circ_{p} \varphi(x,\omega)  
 	& = e^{-2\pi i \eta  (x-\tfrac p Q+u)} \sum_{q=p}^{Q-1+p} h_{q-p}(x-\tfrac p Q+u,\omega+\eta)\,\mathcal Z^\circ_q\varphi(x+u  ,\omega+\eta ) 
	\end{align*}
 which leads to the biinfinite matrix equation 
\begin{align}\label{eqn:biinifinitematrixequation}
 \mathcal Z^\circ \varphi(x,\omega)  &= e^{-2\pi i \eta  (x+u)} H(x+u,\omega+\eta)\, \mathcal Z^\circ \varphi(x+u,\omega+\eta) \end{align}
 where
 \begin{align*}
 H_{pq} (x,\omega)&= e^{2\pi i \eta \tfrac p Q  }\, h_{q-p}(x-\tfrac p Q,\omega)\, \quad \text{if } q-p\in\{0,1,\ldots,Q-1\}\text{ and } 0 \text{ else}.\end{align*}

The above and quasiperiodicity of the Zak transform implies similarly as in the case $Q=1$ that
\begin{align*} 
 \mathcal Z^\circ\varphi (x,\omega) 
 & =\exp\Big(-2\pi i \eta (Rx+u\sum_{r=1}^{R}r \Big)
\cdot 
  \prod_{r=1}^{R} H(x+ru,\omega+r\eta)  \mathcal Z^\circ\varphi(x+Ru,\omega+R\eta)\\
   & =  \, e^{2\pi i (M_2\omega - M_1 x)}\,
  \prod_{r=1}^{R} H(x+ru,\omega+r\eta)   \mathcal Z^\circ\varphi(x,\omega ) 
 ,  \quad (x,\omega)\in \R\times \widehat \R. 
 \end{align*} 
 where we used as before that  $M_2\eta/2$ is an integer.
 
 Using the fact that $H(x,\omega)$ is $1/P$ periodic in $x$ and that $P$ divides $M_1$ we have in addition that 
 \begin{align*} 
 \mathcal Z^\circ\varphi (x+\frac p P,\omega)
 & =  \, e^{2\pi i (M_2\omega - M_1 x)}\,
  \prod_{r=1}^{R} H(x+ru,\omega+r\eta)   \mathcal Z^\circ\varphi(x+\frac p P,\omega ) 
 ,  \quad p=0,\ldots, P-1. 
 \end{align*} 
 
 Hence, for fixed $(x,\omega)$, we have \begin{equation}\label{quasi-per}
  e^{2\pi i (M_1x - M_2\omega )} I =
   \prod_{r=1}^{R} H(x+ru,\omega+r\eta), \quad a.e. \, (x,\omega)\in\R\times\R,
\end{equation}
for every quasiperiodic sequence in the span of  $\mathcal Z^\circ\varphi(x+\frac p P,\omega )$, $p=0,\ldots, P-1$.
The following lemma implies that \eqref{quasi-per} is an identity of operators on  $Q$-quasiperiodic sequences for a.e. $(x,\omega)$ .
\begin{lemma}
 If $\varphi \in S_0(\R)$ and $(\varphi,\frac 1 Q\Z \times P\Z)$ is a Riesz basis for its closed linear span, then $\mathcal Z^\circ\varphi(x+\frac p P,\omega )$, $p=0,\ldots , P-1$, spans the space of $Q$-quasiperiodic sequences for almost every $(x,\omega)\in\R\times\R$.
\end{lemma}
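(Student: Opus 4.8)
The plan is to reduce the statement to a rank condition on a single finite matrix. For $(x,\omega)\in\R\times\widehat\R$ set
$$
\Phi(x,\omega)=\big(\,Z\varphi(x+\tfrac pP-\tfrac qQ,\omega)\,\big)_{p=0,\ldots,P-1;\ q=0,\ldots,Q-1},
$$
a $P\times Q$ matrix. A $Q$-quasiperiodic sequence is determined by its entries with indices $0,1,\ldots,Q-1$, and the $q$-th such entry of $\mathcal Z^\circ\varphi(x+\tfrac pP,\omega)$ is, by definition of $\mathcal Z^\circ$, precisely $Z\varphi(x+\tfrac pP-\tfrac qQ,\omega)$, i.e.\ the $(p,q)$-entry of $\Phi$. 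Hence the vectors $\mathcal Z^\circ\varphi(x+\tfrac pP,\omega)$, $p=0,\ldots,P-1$, span the $Q$-dimensional space of $Q$-quasiperiodic sequences if and only if the $P$ rows of $\Phi(x,\omega)$ span $\mathbb C^Q$, i.e.\ if and only if $\Phi(x,\omega)$ has full column rank $Q$; note $P\geq Q$, since the Riesz hypothesis forces the density $Q/P\leq 1$. Moreover, shifting $x$ by $1/P$ or $\omega$ by $1$ only permutes the rows of $\Phi$ and multiplies them by unimodular scalars, and shifting $x$ by $1$ multiplies $\Phi$ by a unimodular scalar, so the rank of $\Phi$ is invariant under all these shifts; it therefore suffices to prove that $\Phi(x,\omega)$ has rank $Q$ for a.e.\ $(x,\omega)$ in the fundamental domain $[0,\tfrac1P]\times[0,1]$.

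Next I would run the periodization computation already used in Step~3. Writing $k=q+mQ$ with $q\in\{0,\ldots,Q-1\}$ and $m\in\Z$, quasiperiodicity of the Zak transform gives $Z\pi(\tfrac kQ,\ell P)\varphi(x,\omega)=e^{2\pi i\ell Px}e^{-2\pi i m\omega}Z\varphi(x-\tfrac qQ,\omega)$, hence for $\mathbf d=(d_{k,\ell})\in\ell^2(\Z^2)$,
$$
\Big\|\sum_{k,\ell}d_{k,\ell}\,\pi(\tfrac kQ,\ell P)\varphi\Big\|_{L^2(\R)}^2=\int_0^1\!\int_0^{1/P}\sum_{p=0}^{P-1}\Big|\sum_{q=0}^{Q-1}g_q(x,\omega)\,Z\varphi(x+\tfrac pP-\tfrac qQ,\omega)\Big|^2dx\,d\omega=\int_0^1\!\int_0^{1/P}\big\|\Phi(x,\omega)\,\mathbf g(x,\omega)\big\|_{\ell^2}^2\,dx\,d\omega,
$$
where $g_q(x,\omega)=\sum_{m,\ell}d_{q+mQ,\ell}\,e^{2\pi i\ell Px}e^{-2\pi i m\omega}$, $\mathbf g=(g_0,\ldots,g_{Q-1})^T$, and the $1/P$-periodicity of the $g_q$ in $x$ was used to fold the $x$-integral onto $[0,\tfrac1P]$. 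Since $\{e^{2\pi i\ell Px}e^{-2\pi i m\omega}\}_{m,\ell\in\Z}$ is, after normalization, an orthonormal basis of $L^2([0,\tfrac1P]\times[0,1])$, the map $\mathbf d\mapsto\mathbf g$ is (up to the constant $\sqrt P$) unitary from $\ell^2(\Z^2)$ onto $L^2([0,\tfrac1P]\times[0,1])^Q$; in particular $\mathbf g$ ranges over all vector-valued $L^2$ functions.

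Finally I would combine the two. By hypothesis there are $0<A\leq B<\infty$ with $A\|\mathbf d\|^2\leq\|\sum d_{k,\ell}\pi(\tfrac kQ,\ell P)\varphi\|^2\leq B\|\mathbf d\|^2$, which by the previous step translates into $A\int\!\int\|\mathbf g\|^2\leq\int\!\int\|\Phi\mathbf g\|^2\leq B\int\!\int\|\mathbf g\|^2$ for every $\mathbf g\in L^2([0,\tfrac1P]\times[0,1])^Q$. Testing against functions $\mathbf g$ concentrated near Lebesgue points, with each fixed coordinate direction, yields the pointwise operator inequality $A\,I\leq\Phi(x,\omega)^\ast\Phi(x,\omega)\leq B\,I$ for a.e.\ $(x,\omega)$; in particular the $Q\times Q$ Gram matrix $\Phi(x,\omega)^\ast\Phi(x,\omega)$ is invertible, so $\Phi(x,\omega)$ has full column rank $Q$ a.e., which by the first paragraph is the assertion. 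The main work is the bookkeeping of the two incommensurate periods $1/P$ and $1/Q$ in the periodization identity together with the routine Lebesgue-point argument converting the integrated Riesz inequalities into pointwise spectral bounds; the conceptual point — that the span of the quasiperiodic sequences is exactly the row space of $\Phi$ — is immediate once $\Phi$ is set up correctly. Note that the continuity of $Z\varphi$ supplied by $\varphi\in S_0(\R)$ plays no role here: the Riesz basis hypothesis alone yields the almost-everywhere statement.
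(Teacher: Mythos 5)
Your argument is, at its core, the same as the paper's: you periodize the Riesz inequality over $\tfrac 1Q\Z\times P\Z$, rewrite the synthesis operator as pointwise multiplication of a vector-valued multiplier $\mathbf g$ by the $P\times Q$ Zak matrix $\Phi(x,\omega)$, and deduce that $\Phi$ has full column rank almost everywhere, which is exactly the spanning statement (the paper works with the transpose, a $Q\times P$ matrix whose $P$ columns span $\mathbb C^Q$). The identification of the span of the quasiperiodic vectors with the row space of $\Phi$, the unitarity up to the factor $\sqrt P$ of $\mathbf d\mapsto\mathbf g$, and the reduction to the fundamental domain $[0,\tfrac1P]\times[0,1]$ by circular row shifts with unimodular factors are all correct and mirror the paper's computation leading to \eqref{eqn:linearindependence} and \eqref{eqn:lowerbound}.

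The one step you must tighten is the passage from the integrated inequality to the pointwise bound $A\,I\leq\Phi(x,\omega)^\ast\Phi(x,\omega)$. Testing with $\mathbf g$ concentrated near Lebesgue points ``in each fixed coordinate direction'' only yields $A\leq\big(\Phi^\ast\Phi\big)_{qq}=\sum_{p}|Z\varphi(x+\tfrac pP-\tfrac qQ,\omega)|^2$ for each $q$ (this is precisely the conclusion the paper draws from \eqref{eqn:lowerbound}), and lower bounds on the diagonal of a Gram matrix do not imply its invertibility --- a matrix with all columns equal and of norm one is the standard counterexample --- so as literally stated this step does not give full column rank. The repair is routine: test with $\mathbf g=\chi_E\,v$ for arbitrary fixed $v\in\mathbb C^Q$ and measurable $E$, obtain $\langle\Phi^\ast\Phi\,v,v\rangle\geq A\|v\|^2$ a.e.\ for each $v$ in a countable dense subset of $\mathbb C^Q$, and conclude by continuity in $v$; equivalently, argue as the paper does that a measurable nonzero kernel vector of $\Phi$ on a set of positive measure would produce multipliers $m_q$, not all zero, annihilating \eqref{eqn:linearindependence} and contradicting the lower Riesz bound. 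With that correction your proof is complete, and your closing observation that the continuity of $Z\varphi$ plays no role here is consistent with the paper, where $\varphi\in S_0(\R)$ is used only afterwards to obtain continuity of $\widetilde H$.
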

\begin{proof} For any 
 $d=(d_{k,\ell})\in \ell^2(\Z^2)$, we have
 $$
 	\|\{d_{k,\ell}\}\|_{\ell^2}\asymp \| \sum_{k,\ell\in\Z}\ d_{k,\ell} \ \pi(\tfrac k Q, \ell P)\varphi\|_{L^2(R)} = \| \sum_{k,\ell\in\Z}\ d_{k,\ell}\ Z \pi(\tfrac k Q, \ell P)\varphi\|_{L^2([0,1]\times[0,1])}.
 $$ 
We compute as above
\begin{align*}
\sum_{k,\ell\in\Z}\ d_{k,\ell}\ Z \pi(\tfrac k Q, \ell P)\varphi(x,\omega)
 &= \sum_{k,\ell\in\Z} \ d_{k,\ell}\ e^{2\pi i   \ell P x }Z\varphi(x-\tfrac k Q,\omega)
 \\
 &= \sum_{q=0}^{Q-1}\sum_{k,\ell\in\Z}\ d_{q+kQ,\ell}\, e^{2\pi i (\ell P  x+k\omega)}Z\varphi(x- \tfrac q Q,\omega )
 \\
  &= \sum_{q=0}^{Q-1} m_q(x,\omega)\, Z\varphi(x- \tfrac q Q,\omega ),
 \quad (x,\omega)\in \R\times \widehat \R.
 \end{align*} 
 We conclude that for some $A>0$ and all $m_0(x,\omega),\ldots, m_{Q-1}(x,\omega)$ that are $1$ periodic in $\omega$ and $1/P$ periodic in $x$, we have
\begin{align}
 A \|\{d_{k,\ell}\}\|^2_{\ell^2}&= A \sum_{q=0}^{Q-1}\|m_q\|^2_{L^2([0,1])} \leq \| \sum_{q=0}^{Q-1} m_q(x,\omega)\, Z\varphi(x- \tfrac q Q,\omega )\|^2_{L^2([0,1]\times[0,1])} \notag \\
 &=    \sum_{p=0}^{P-1} \int_0^{\frac 1 P} \int_0^1 \left| \sum_{q=0}^{Q-1} m_q(x-\tfrac p P,\omega)\, Z\varphi(x-\tfrac p P- \tfrac q Q,\omega )\right|^2 \, d\omega \,dx \notag \\
 &=    \sum_{p=0}^{P-1} \int_0^{\frac 1 P} \int_0^1 \left| \sum_{q=0}^{Q-1} m_q(x ,\omega)\, Z\varphi(x-\tfrac p P- \tfrac q Q,\omega )\right|^2 \, d\omega \,dx \label{eqn:linearindependence}\\
 &\leq   \int_0^{\frac 1 P} \int_0^1   \sum_{q=0}^{Q-1} |m_q(x ,\omega)|^2\, \sum_{p=0}^{P-1} \left| Z\varphi(x-\tfrac p P- \tfrac q Q,\omega )\right|^2\, d\omega \,dx .\label{eqn:lowerbound}
\end{align}
From \eqref{eqn:lowerbound} we conclude that for  $q=0,\ldots, Q-1$ we have
\begin{align*}
 A\leq  \sum_{p=0}^{P-1} \left| Z\varphi(x-\tfrac p P- \tfrac q Q,\omega )\right|^2\quad a.e.\, (x,\omega)\in [0,1/P]\times[0,1].
\end{align*}
As  $\sum_{p=0}^{P-1} \left| Z\varphi(x-\tfrac p P- \tfrac q Q,\omega )\right|^2$ is $1/P$ periodic in $x$, this inequality holds in fact for a.e. $(x,\omega)\in \R\times\R$.
Moreover, \eqref{eqn:linearindependence} implies that for $q=0,\ldots,
Q-1$, the $\mathbb C^P$ vectors
\begin{align*}
 \left(Z\varphi(x-\tfrac q Q,\omega), Z\varphi(x-\tfrac q Q -\tfrac 1 P,\omega) , Z\varphi(x-\tfrac q Q -\tfrac 2 P,\omega), \ldots,  Z\varphi(x-\tfrac q Q -\tfrac {P-1} P,\omega)\right) 
\end{align*}
are linearly independent for a.e. $(x,\omega)\in [0,1/P]\times[0,1]$, indeed, else we could find $L^2(\R^2)$ functions $m_q(x,\omega)$, not all $m_q(x,\omega)=0$, such that \eqref{eqn:linearindependence} equals $0$. We conclude that the matrix
\begin{align*}
 \begin{pmatrix}
 Z\varphi(x ,\omega)& Z\varphi(x  {-}\tfrac 1 P,\omega) & Z\varphi(x  {-}\tfrac 2 P,\omega)&   {\ldots}&  Z\varphi(x  {-}\tfrac {P{-}1} P,\omega) \\
  Z\varphi(x{-}\tfrac 1 Q,\omega)& Z\varphi(x{-}\tfrac 1 Q {-}\tfrac 1 P,\omega) & Z\varphi(x{-}\tfrac 1 Q {-}\tfrac 2 P,\omega)&   {\ldots}&  Z\varphi(x{-}\tfrac 1 Q {-}\tfrac {P{-}1} P,\omega) \\
 Z\varphi(x{-}\tfrac 2 Q,\omega)& Z\varphi(x{-}\tfrac 2 Q {-}\tfrac 1 P,\omega) & Z\varphi(x{-}\tfrac 2 Q {-}\tfrac 2 P,\omega)&   {\ldots}&  Z\varphi(x{-}\tfrac 2 Q {-}\tfrac {P{-}1} P,\omega) \\
 \vdots &\vdots &\vdots & &\vdots  \\
  Z\varphi(x{-}\tfrac {Q{-}1} Q,\omega)& Z\varphi(x{-}\tfrac   {Q{-}1}Q {-}\tfrac 1 P,\omega) & Z\varphi(x{-}\tfrac  {Q{-}1} Q {-}\tfrac 2 P,\omega)&  {\ldots}&  Z\varphi(x{-}\tfrac  {Q{-}1} Q {-}\tfrac {P{-}1} P,\omega) \\
\end{pmatrix}
\end{align*}
is full rank for a.e. $(x,\omega)\in [0,1/P]\times[0,1]$, so its $P$ columns are a spanning set of $\mathbb C^Q$ for a.e. $(x,\omega)\in [0,1/P]\times[0,1]$. Note that replacing $x$ by $x-\tfrac {p_0} P$ in the matrix above corresponds to a circular shift of  the columns of the matrix by $p_0$, with the possible appearance of a non-zero scalar factor $e^{2\pi i \omega}$  due to the quasiperiodicity of the Zak transform.  This  allows us to extend the observation on the columns spanning $\mathbb C^Q$ to hold for almost every $(x,\omega)\in \R\times \widehat\R$.

\end{proof}
  
  In the $Q$ dimensional model, that is, choosing $\widetilde H(x,\omega)\in\mathbb C^{Q\times Q}$ so that for any $\mathcal Z\in\mathbb C^Q$ and any $(x,\omega)\in \R\times \widehat \R$ we have 
  $$ \Big(H(x,\omega) \mathcal Z^\circ\Big)_p= \Big(\widetilde H(x,\omega) \mathcal Z \Big)_p, \quad p=0,1,\ldots, Q-1,$$
  we have equivalently (with $I$ now denoting the identity matrix in $\mathbb C^{Q\times Q}$
   \begin{align*}
 e^{2\pi i (M_1x - M_2\omega )} I=
   \prod_{r=1}^{R} \widetilde H(x+ru,\omega+r\eta) ,  \quad a.e. \, (x,\omega)\in   \R\times \widehat \R. \end{align*}
Taking $h(x,\omega)=\det  \widetilde H(x,\omega)$ we conclude 
    \begin{align*}
 e^{2\pi i Q(M_1x - M_2\omega )} =
   \prod_{r=1}^{R} h(x+ru,\omega+r\eta) ,  \quad a.e. \, (x,\omega)\in   \R\times \widehat \R. \end{align*}
   It remains to argue that $h(x,\omega)$ is continuous, since then,  Proposition~\ref{prop:key1} and the $1/P$ periodicity of $h(x,\omega)$ in $x$ and the 1 periodicity in $\omega$ implies first that $R$ divides $QM_2$.  Hence $RL=QM_2$ for some $L\in\N$ and $u=M_2/R=L/Q\in\tfrac 1 Q \Z$.  Second, we have  $RP$ divides $QM_1$, that is, $\eta \tfrac Q P = \tfrac {QM_1}{RP}$ is an integer. By assumption, we have that $(P,Q)=1$, so $\eta\in P\Z$.   However, since by assumption $(u,\eta) \not\in \frac{1}{Q}\Z\times P\Z$, this is a contradiction.

We conclude by showing that $\widetilde H$ and therefore $h$ depends continuously on $(x,\omega)$.
To this end, observe that $\varphi\in S_0(\R)$ implies that   both, $\mathcal Z^\circ \varphi(x,\omega)$ and  $\mathcal Z\varphi(x,\omega)$   are continuous in $(x,\omega)$. 
Let $\Phi(x,\omega)\in\mathbb C^{Q\times P}$ be the frame synthesis matrix with columns $\mathcal Z \varphi(x+\frac p P,\omega )$, $p=0,\ldots, P-1$. Equation \eqref{eqn:biinifinitematrixequation} implies that 
\begin{align*}
   e^{2\pi i \eta  x} \mathcal Z \varphi(x-u,\omega-\eta)   &= \widetilde H(x,\omega)\, \mathcal Z \varphi(x,\omega).
\end{align*}
Inserting $x+\frac p P$ for $x$ and using that
 $\widetilde H(x,\omega)$ is $1/P$ periodic in $x$, we obtain 
\begin{align*}
   e^{2\pi i \eta  x} \ \Phi(x-u,\omega-\eta) \ D(\eta)  &= \widetilde H(x,\omega)\, \Phi(x,\omega), 
\end{align*}
where $D(\eta)$ is the diagonal matrix with entries $1,e^{-2\pi i \eta/P},e^{-2\pi i \eta \,2/P},\ldots,e^{-2\pi i \eta\,(P-1)/P} $.

The columns of $\Phi(x,\omega)$ form a frame that depends continuously on $(x,\omega)$. Hence, the  rame operator $S(x,\omega)=\Phi(x,\omega)\Phi(x,\omega)^\ast \in\mathbb C^{Q\times Q}$ and its inverse $S(x,\omega)^{-1}$ depend continuously on $(x,\omega)$. Similarly, the matrix consisting of the dual frame elements $\Psi(x,\omega)= S(x,\omega)^{-1}\Phi(x,\omega)$ depends continuously on $(x,\omega)$. Clearly,  $\Psi(x,\omega)^\ast$ is a right inverse of $\Phi(x,\omega)$, The equality 
\begin{align*}
   e^{2\pi i \eta  x} \ \Phi(x-u,\omega-\eta) \ D(\eta)\ \Psi(x,\omega)^\ast&= \widetilde H(x,\omega)\, \Phi(x,\omega) \, \Psi(x,\omega)^\ast=\widetilde H(x,\omega)
\end{align*}
shows that $\widetilde H(x,\omega)$ depends continuously on $(x,\omega)$. The proof is complete.

\section{Construction of Gabor spaces with additional shift invariance}\label{examples}

In this section, we  study the 
case $\pi(\frac 1 R,0)\varphi \in \G(\varphi,\Z\times P\Z)$, $\text{gcd}(P,R)=1$, and give a complete characterization of those $\varphi$ which satisfy  $\pi(\frac 1 R,0)\varphi \in \G(\varphi,\Z\times P\Z)$.

Recall that $\pi(\frac 1 R,0)\varphi \in \G(\varphi,\Z\times P\Z)$ if and only if  there exists a sequence $c=(c_{k,\ell})\in \ell^2(\Z^2)$ with 
\begin{align}\notag
 Z\varphi (x-\tfrac 1 R,\omega) &= \sum_{k,\ell\in\Z}c_{k,\ell}\, e^{2\pi i (P\ell x+k\omega)}Z\varphi(x,\omega)
 \\
 &= h(x,\omega) Z\varphi(x,\omega), \quad (x,\omega)\in \R\times \widehat \R. \label{eqn:key} \end{align}

Our strategy is to construct a quasiperiodic function $F(x,\omega)$ and a function $h(x,\omega)$ so that \eqref{eqn:key} holds with $F$ in place of $Z\varphi$.  Then we use the fact that the Zak transform is onto the space of quasiperiodic functions, and,  using   a Zak transform inversion formula \cite{gro01}.
we construct
\begin{align*}
 \varphi(x)=\int_0^1 Z\varphi(x,\omega)\, d\omega=\int_0^1 F(x,\omega)\, d\omega=\int_u^{1+u}F(x,\omega), \quad \text{a.e.}\  x\in\R.
\end{align*}

In order to construct the quasiperiodic function $F(x,\omega)$, we  shall show that the conditions
\begin{enumerate}
\item[(S)] $\displaystyle F (x-\tfrac 1 R,\omega)  = h(x,\omega) F(x,\omega), \quad x \in [1/R, 1],\ \omega\in [0,1];$
\item[(Q)] $\displaystyle e^{2\pi i  \omega} =\prod_{r=0}^{R-1} h(x+\tfrac r R,\omega), \quad (x,\omega) \in [0,1/P]{\times}[0,1]\cap \supp F; $
\item[(P)] $h(x,\omega)$ is $1/P$ periodic in $x$ and 1 periodic in $\omega$,
 \end{enumerate}
characterize the pairs $F(x,\omega)=Z\varphi(x,\omega)$ and $h(x,\omega)$ that satisfy \eqref{eqn:key}.

First, note that \eqref{eqn:key} implies that the zero set $\mathcal E$ of $Z\varphi$ is $1/R$ periodic. Indeed, clearly $ Z\varphi(x,\omega)=0$ implies  $ Z\varphi(x-\tfrac 1 R,\omega)=0$. But also, $Z\varphi(x,\omega)=0$ implies
\begin{align*}
 0=Z\varphi(x+1,\omega)=Z\varphi(x+1-\tfrac 1 R,\omega)=Z\varphi(x+1-\tfrac 2 R,\omega)=\ldots=Z\varphi(x+\tfrac 1 R,\omega).
\end{align*}

In addition, since $R\in\N$,  the quasiperiodicity conditions 
 \begin{align*}
 e^{2\pi i M \omega} =\prod_{r=1}^{RM} h(x+\tfrac r R,\omega),  \quad (x,\omega)\in \supp Z\varphi \subseteq \R\times \widehat \R, \end{align*} 
  are just the $M$-th power of the equation where $M=1$, that is, the set of equations is equivalent to 
  \begin{align*}
 e^{2\pi i  \omega} =\prod_{r=1}^{R} h(x+\tfrac r R,\omega),  \quad (x,\omega)\in \supp Z\varphi \subseteq \R\times \widehat \R. \end{align*} 

We conclude that the three conditions given above follow from \eqref{eqn:key}. To observe that these conditions are also sufficient, note first that as  argued above, quasiperiodicity of $F$ implies that the zero set of $F$ is $1/R$ periodic.  Hence, condition (b) extends to all $(x,\omega)\in \R\times \widehat \R$.  Now, (b) together with (a) implies that 
\begin{align*}
  h(x,\omega)F(x,\omega)=F(x-\tfrac 1 R,\omega)=e^{-2\pi i \omega} F(x+\tfrac{R-1}{R},\omega),\quad x\in [0,1/R]{\times}[0,1].
\end{align*}
Indeed, it suffices to check this on $\supp F$ where we have \begin{align*}
 e^{-2\pi i \omega} F(x+\tfrac{R-1}{R},\omega) 
 &= \prod_{r=0}^{R-1} h(x+\tfrac r R,\omega) F(x+\tfrac{R-1}{R},\omega)\\
 & \vdots \\
  &= h(x,\omega) h(x+\tfrac 1 R,\omega) F(x+\tfrac{1}{R},\omega)\\
  &= h(x,\omega) F(x,\omega),
\end{align*}
which concludes our proof of sufficiency.

\begin{figure}
 \includegraphics[scale=0.3]{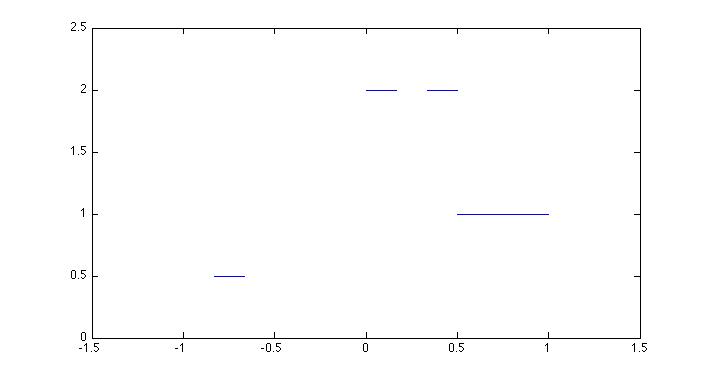}\includegraphics[scale=0.3]{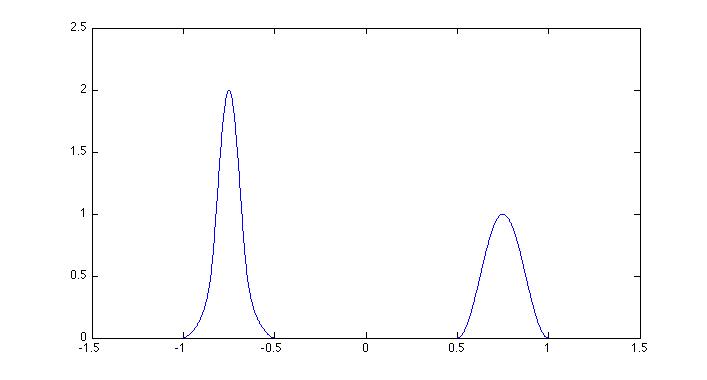}
 \caption{Functions as constructed in Example~\ref{ex:1} and Example~\ref{ex:2}}
\end{figure}

In our  first example, we construct a discontinuous window function which generates a Gabor space that features an additional shift invariance. 

\begin{example}\label{ex:1}\rm
 We choose $R=2$ and $P=3$. Let  $I_k=[k/6,(k+1)/6]\times \widehat \R$ for $k\in \Z$. We define the function $h(x,\omega)=2$ on $\bigcup_k I_{2k}$ and $h(x,\omega)=e^{2\pi i \omega}/2$ on $\bigcup_k I_{2k+1}$. Clearly, $h$ satisfies $(Q)$ and $(P)$. We set $F(x,\omega)=1$ for $x\in[1/2,1]$ and 
$$F(x,\omega)=F(x+1/2-1/2,\omega)=h(x+1/2,\omega)F(x+1/2,\omega)=h(x+1/2,\omega),\quad x\in[0,1/2],$$
and extend the function quasiperiodically.  In the following, let $\int=\int_{-1/2}^{1/2}$.  Motivated by the Zak transform inversion formula, we define for $x\in\R$,
\begin{align*}
 \varphi(x)&=\int F(x,\omega)\, d\omega= 
\begin{cases}
\int e^{2\pi i m \omega} =\delta_0(m),&\ x\in [m+1/2,m+1],\\
\int 2e^{2\pi i m \omega} =2\delta_0(m),&\ x\in [m,m{+}1/6]{\cup} [m{+}1/3, m{+}1/2],\\
\int \frac 1 2 e^{2\pi i (m+1) \omega} =\frac 1 2 \delta_0(m+1),&\ x\in [m+1/6,m+1/3], \\
\end{cases}\\
&= 1/2 \chi_{[-5/6,-2/3]} +2\chi_{[0,1/6]} + 2\chi_{[1/3,1/2]}+\chi_{[1/2,1]}.
\end{align*}

Clearly, $\varphi\notin S_0(\R)$.  Moreover, note that $9/4 \leq \sum_{p=0}^{2}\Big| Z_\varphi (t-\tfrac p 3,\nu)\Big|^2\leq 9$ for all $t$ and $\nu$ implies that  $(\varphi,\Z\times P\Z)$ is a Riesz basis for $\G(\varphi,\Z\times P\Z)$.

In addition, we would like to point out once more that $h(t,\nu)=\sum c_{k,\ell} e^{2\pi i (P\ell x-k\nu)}$ not being continuous implies that $\pi(1/2,0)\varphi =\sum c_{k,\ell}\pi(k,P\ell)\varphi$ converges rather slowly, for example, we do not have absolute convergence. 

\begin{remark}\label{remark:onlyshift}\rm
 Note that the shift-invariant space $\G(\varphi,\Z\times \{ 0\})$ is constant on the intervals $[m+1/2,m+1]$, $m\in \Z$, but the half shift $\varphi(x-1/2)$ does not satisfy this property. Hence,  $\pi(1/2,0)\varphi=T_{1/2}\varphi$ is not a member of the shift-invariant space $\G(\varphi,\Z\times \{ 0\})$, showing that membership of translates to Gabor spaces cannot be reduced to membership of translates to respective shift-invariant spaces.
\end{remark}

\end{example}
In the following, we construct a smooth window $\varphi$ which has an additional shift invariance and which generates therefore not a Riesz basis for the Gabor space it spans.  Note that   mollifying $h$ in the example above leads to a a continuous function which does not satisfy property  (Q).  

\begin{example}\label{ex:2} \rm  We consider again $R=2$ and $P=3$ and construct a Schwartz class function $\varphi$ such that
$T_{\frac 1 2}\varphi \in \mathcal G(\varphi, \Z\times 3\Z)$.

To this end, choose a function $u(x)$ on $[0,1/2]$ with 
\begin{enumerate}
\item $u$ has only values in $[\frac 1 2,2]$, $u(0)=1$ but $u$ not constant 1;
\item $u$ is smooth;
\item $u(x)u(x+1/6)=1$ for $x\in[0,\frac 1 3]$.
\end{enumerate}
Now, set $h(x,\omega)=u(x)$ for $x\in [0, \frac 1 2]$ and $h(x,\omega)=e^{2\pi i w}/u(x-1/2)$ for $x\in [\frac 1 2,2]$. So $h$ periodically extended is smooth away from the set $\frac 1 2 \Z \times \widehat \R$ and satisfies (Q).

Now, we define $F(x,\omega)=v(x)$ for $x\in[1/2,1]$ where $v(1/2)=v(1)=0$, $v(x)\in [0,1]$, and $v$ smooth.  Further, define
\begin{align*}
  F(x,\omega)&=F(x+1/2 -1/2,\omega)=h(x+1/2,\omega)F(x+1/2,\omega)\\ &=e^{2\pi i \omega}v(x+1/2)/u(x),\quad x\in[0,1/2].
\end{align*}  Clearly, $F$ is smooth away from  $\frac 1 2 \Z \times \widehat \R$, but by choosing $v^{(n)}(0)=v^{(n)}(1/2)=0$ for all $n\in\N$ ensures that $F$ is smooth on $\R\times \widehat \R$.

We compute
\begin{align*}
 \varphi(x)&=\int_{-1/2}^{1/2} F(x,\omega)\, d\omega \\
 & = 
\begin{cases}
\int e^{2\pi i m \omega} v(x) =\delta_0(m) v(x),&\ x\in [m+1/2,m+1],\\
\int \frac {v(x+1/2)}{u(x)} e^{2\pi i (m+1) \omega} =\frac {v(x+1/2)}{u(x)} \delta_0(m+1),&\ x\in [m ,m+1/2], \\
\end{cases}\\
& = 
\begin{cases}
 \frac {v(x+1/2)}{u(x)}  ,&\ x\in [-1 ,-1/2], \\
   v(x),&\ x\in [1/2,1],\\
\end{cases}\\
\end{align*}
We conclude that $\varphi$ is supported on $[-1,1]$ and smooth.   \end{example}
\newcommand{\etalchar}[1]{$^{#1}$}
\def\cprime{$'$} \def\cprime{$'$}
\providecommand{\bysame}{\leavevmode\hbox to3em{\hrulefill}\thinspace}
\providecommand{\MR}{\relax\ifhmode\unskip\space\fi MR }
\providecommand{\MRhref}[2]{%
  \href{http://www.ams.org/mathscinet-getitem?mr=#1}{#2}
}
\providecommand{\href}[2]{#2}

%
\end{document}